\newtheorem{thm}{Theorem}[section]
\newtheorem{cor}[thm]{Corollary}
\newtheorem{lem}[thm]{Lemma}
\newtheorem{prop}[thm]{Proposition}
\newtheorem{defn}[thm]{Definition}
\newtheorem{conj}[thm]{Conjecture}
\newtheorem{rem}[thm]{Remark}
\numberwithin{equation}{section}
\newcommand{\defin}[1]{{\bf\emph{#1}}}
\newcommand{\Field}{{\mathbb {F}}}
\newcommand{\HFKa}{{\widehat {\rm {HFK}}}}
\newcommand{\Z}{\mathbb Z}
\newcommand{\N}{\mathbb N}
\newcommand{\Q}{\mathbb Q}
\newcommand{\R}{\mathbb R}
\begin{document}

\title{A note on thickness of knots}

\author{Andr\'{a}s I. Stipsicz}
\address{R\'enyi Institute of Mathematics\\
H-1053 Budapest\\ 
Re\'altanoda utca 13--15, Hungary}
\email{stipsicz.andras@renyi.hu}

\author{Zolt\'an Szab\'o}
\address{Department of Mathematics\\
Princeton University\\
 Princeton, NJ, 08544}
\email{szabo@math.princeton.edu}

\begin{abstract}
  We introduce a numerical invariant $\beta(K)\in \N$ of a knot
  $K\subset S^3$ which measures how non-alternating $K$ is.  We prove
  an inequality between $\beta (K)$ and the (knot Floer) thickness
  $th(K)$ of a knot $K$. As an application we show that all Montesinos
  knots have thickness at most one.
\end{abstract}
\maketitle

\section{Introduction}
\label{sec:intro}

A knot $K\subset S^3$ is \emph{alternating} if it admits a diagram 
with the property that when traversing through the diagram, we alternate
between over- and under-crossings. (An intrinsic definition of alternating
knots have been recently found by Greene and Howie~\cite{Josh, Howie}.)
A diagram of $K$ partitions the plane into domains (the connected components
of the complemet of the projection), and the alternating property
can be rephrased by saying that on the boundary of each domain each edge
connects an under-crossing with an over-crossing. Indeed, this observation
provides a way to measure how far a knot is from being alternating.
We introduce the following definition:

\begin{defn}
  Suppose that $D$ is the diagram of a given knot $K\subset S^3$.
  A domain $d$ of $D$ is \defin{good} if any edge
  on the boundary of $d$ 
  connects an over- and an undercrossing. The domain $d$ is
  \defin{bad} if it is not good. The number of bad domains of the diagram
  $D$ is denoted by $B(D)$.
\end{defn}

Obviously the diagram $D$ is alternating iff $B(D)=0$. Indeed,
by taking
\[
\beta (K)=\min \{ B(D)\mid D\ \mbox{diagram\ for\ }\ K\}
\]
we get a knot invariant, which satisfies that $\beta (K)=0$ if and
only if $K$ is an alternating knot. As it is typical for knot
invariants given by minima of quantites over all diagrams, it is easy
to find an upper bound on $\beta (K)$ (by determining $B(D)$ for a
diagram of $K$), but it is harder to actually compute its value.

As it turns out, knot Floer homology provides a lower bound for $\beta
(K)$ through the \emph{thickness} of $K$. Recall that $\HFKa (K)$, the
hat-version of knot Floer homology of $K$ is a finite dimensional
bigraded vector space over the field $\Field$ of two elements. By
collapsing the Maslov and Alexander gradings $M$ and $A$ on $\HFKa
(K)$ to $\delta =A-M$ we get a graded vector space $\HFKa
^{\delta}(K)$. The thickness $th(K)$ of $K$ is the largest possible
difference of $\delta$-gradings of two homogeneous (nonzero) elements
of this vector space. It is known that for an alternating knot $K$ the
$\delta$-graded Floer homology is in a single $\delta$-grading
(determined by the signature of the knot), hence if $K$ is
alternating, then $th(K)=0$. (Knots satisfying $th(K)=0$ are called
\emph{thin} knots, hence alternating knots are thin.)

With this definition in place, the main result of this paper is
\begin{thm}\label{thm:estimate}
Suppose that $K\subset S^3$ is a non-alternating knot. Then
    \begin{equation}\label{eq:ThicknessBadness}
    th(K)\leq \frac{1}{2}\beta (K)-1.
    \end{equation}
\end{thm}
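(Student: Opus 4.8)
The plan is to prove \eqref{eq:ThicknessBadness} by induction on the number of bad domains, with the unoriented skein exact sequence of knot Floer homology as the engine driving the reduction. I would fix a diagram $D$ of $K$ realizing the minimum, so that $B(D)=\beta(K)$. The base of the induction is the alternating case $B(D)=0$: here the diagram is alternating, the collapsed homology $\HFKa^\delta$ is concentrated in the single $\delta$-grading determined by the signature, and hence $th=0$. The content of the inductive step is to show that the presence of a bad domain can be traded for a controlled increase in the width of the $\delta$-support, and that the accumulated cost is exactly $\tfrac12\beta(K)-1$.

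First I would analyze a bad domain $d$ combinatorially. By definition $d$ carries a boundary edge $e$ joining two crossings of the same type (both over or both under from the point of view of the strand containing $e$). I would choose a crossing $c$ at an endpoint of $e$ and apply the unoriented skein exact triangle at $c$,
\[
\cdots \longrightarrow \HFKa(D') \longrightarrow \HFKa(D'') \longrightarrow \HFKa(D) \longrightarrow \cdots,
\]
where $D'$ and $D''$ are the two unoriented resolutions of $D$ at $c$. The combinatorial claim to establish is that resolving at $c$ repairs the offending edge and strictly lowers the number of bad domains, so that both resolved diagrams lie closer to alternating and fall under the inductive hypothesis; an adjacency/parity analysis of how the domains surrounding $c$ merge under resolution should show that bad domains disappear in a controlled way (I expect at least two at a time).

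Since the long exact sequence forces the $\delta$-support of $\HFKa(D)$ to lie inside the union of the $\delta$-supports of $\HFKa(D')$ and $\HFKa(D'')$, shifted by the degrees of the skein maps, the thickness of $K$ is bounded by the larger of the two resolution thicknesses plus the offset between their (shifted) supports. Feeding in the inductive hypothesis, the key will be that each reduction costs at most $1$ in $\delta$-width while removing a pair of bad domains, and that the thin base case contributes the additive $-1$; together these produce the coefficient $\tfrac12$ and the constant $-1$. The main obstacle I anticipate is precisely this bookkeeping. On the one hand I must guarantee that a suitable resolution always lowers badness by the right amount without spawning new bad domains elsewhere, and that distinct bad domains can be treated essentially independently; on the other hand I must pin down the $\delta$-degrees of the skein maps sharply enough that the accumulated error is exactly $\tfrac12\beta(K)-1$ rather than a weaker linear bound. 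In particular, extracting the $-1$ — which amounts to showing that the first reduction off the original knot lands its two resolutions in a single $\delta$-grading — and correctly passing between the link diagrams arising mid-induction and the knot $K$ we started with, is where the real work lies.
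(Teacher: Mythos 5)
Your strategy is genuinely different from the paper's, and as written it has gaps that go beyond bookkeeping. The central unproved claim is combinatorial: that resolving a crossing at the end of a bad edge removes at least two bad domains and creates no new ones. This is doubtful. An unoriented resolution at $c$ merges two of the domains meeting at $c$ and rewires the four incident edges; an edge that previously ran from $c$ to an over-crossing now runs between two crossings that may well be of the same type, so a domain that was good in $D$ can become bad in $D'$ or $D''$. Without such a monotonicity statement the induction does not close. Second, even granting that each step trades two bad domains for one unit of $\delta$-width, the arithmetic yields $th(K)\leq \frac{1}{2}B(D)$, not $\frac{1}{2}B(D)-1$; you acknowledge that extracting the $-1$ is ``where the real work lies,'' but that is precisely the content of the theorem beyond the trivial bound, so the essential point is left open. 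Third, the resolutions of a knot diagram are links, so the induction must run through links, where the unoriented skein triangle carries extra tensor factors and $\delta$-shifts that you have not pinned down. (A bound of this flavor for the Turaev genus is proved by Lowrance by related spectral-sequence/spanning-tree techniques, so the route is not hopeless, but it requires substantially more than is sketched here.)

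The paper's proof avoids all of this by working directly with the Kauffman-state generators of a chain complex $C_{D,p}$ computing $\HFKa(K)$. The $\delta$-grading of a state is $-\frac{1}{4}{\rm wr}(D)$ plus a sum over crossings of local contributions equal to $\pm\frac{1}{4}$, one for each occupied corner. One checks that all corners of a good domain carry the same local value, so good domains contribute identically to every state, while the corners of a bad domain can differ, but only within a window of $\frac{1}{2}$. Hence the $\delta$-spread of the whole complex, and a fortiori of its homology, is at most $\frac{1}{2}B(D)$. The $-1$ comes from placing the marked point $p$ on an edge witnessing badness: its two adjacent domains are both bad but receive no Kauffman corners, so they drop out of the count, giving $\frac{1}{2}(B(D)-2)$. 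Minimizing over all (necessarily non-alternating) diagrams of $K$ gives the theorem. If you want to salvage your approach, the first thing to nail down is the behavior of $B$ under resolution; but the state-sum argument is both shorter and sharper.
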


While the thickness of $K$ can be used to estimate how non-alternating
$K$ is, the formula of Equation~\eqref{eq:ThicknessBadness} also can
be used to estimate $th(K)$ by finding appropriate diagrams of $K$.
In particular, the formula can be applied to show

\begin{cor}(Lowrance~\cite{Lowrance})
  \label{cor:Monte}
  Suppose that $K$ is a Montesinos knot. Then, $th(K)\leq 1$.
\end{cor}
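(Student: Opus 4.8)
The plan is to derive the corollary from Theorem~\ref{thm:estimate}. If $K$ is alternating then $th(K)=0\le 1$ and there is nothing to prove, so assume $K$ is non-alternating. Since $\beta(K)\le B(D)$ for every diagram $D$ of $K$, Theorem~\ref{thm:estimate} reduces the claim $th(K)\le 1$ to producing a single diagram $D$ of $K$ with $B(D)\le 4$: indeed one would then have $th(K)\le\frac12\beta(K)-1\le\frac12 B(D)-1\le 1$. The whole problem thus becomes the combinatorial task of drawing a Montesinos knot with at most four bad domains.

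First I would use the standard description of a Montesinos knot $K=M(e;\beta_1/\alpha_1,\dots,\beta_n/\alpha_n)$ as the numerator closure of the tangle sum $T_1+\dots+T_n$ of rational tangles, the integer part $e$ being a block of horizontal twists that can be merged into one of the $T_i$ or treated as an extra elementary tangle. The key structural input is that every rational tangle admits an alternating diagram, obtained from its continued-fraction expansion as an alternating sequence of horizontal and vertical twist regions. Drawing each $T_i$ inside such an alternating box, every domain contained in the interior of a single box is automatically good. Hence the only domains that can fail to be good are those meeting the arcs that splice consecutive boxes together, together with the one or two large domains created by the final closure.

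Next I would examine these seams. The two arcs joining $T_i$ to $T_{i+1}$ carry no crossings, so whether the domain between the two boxes is good is governed only by the four crossings at the facing corners of $T_i$ and $T_{i+1}$; by reflecting $T_{i+1}$ (which does not change the tangle it represents up to the usual moves) one can match the over/under pattern at all but finitely many seams. Following the checkerboard coloring along the necklace of tangles, I expect the unrepairable failures to be pushed to the two ends, so that the bad domains are at most the two polar regions of the closure together with at most two adjacent seam regions, four in total. Crucially, this count is independent of $n$ and of the complexity of each $T_i$, because all the internal twisting is absorbed into good domains.

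The main obstacle is precisely the bookkeeping of the previous paragraph: one must show that after the optimal choice of reflections the residual defects do not accumulate one-per-seam (which would give a bound growing with $n$), but instead propagate and cancel along the chain and localize near the closure. Establishing this should require a case analysis organized by the signs of the slopes $\beta_i/\alpha_i$ and the parity data of the closure, verifying in each case that only the claimed handful of domains are bad. Once this localization is in hand, $B(D)\le 4$ follows, and with it $th(K)\le \frac12\cdot 4-1=1$.
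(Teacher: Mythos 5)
Your reduction is the right one and matches the paper's: handle the alternating case trivially, and otherwise exhibit a single diagram $D$ of the Montesinos knot with $B(D)\leq 4$, so that Theorem~\ref{thm:estimate} gives $th(K)\leq \frac12\cdot 4-1=1$. Your first structural step --- drawing each rational tangle in its alternating continued-fraction form so that all domains interior to a tangle box are good, leaving only the seam domains and the two large closure domains as candidates for badness --- is also exactly what the paper does (this is Lemma~\ref{lem:isotopy}, via the isotopies of \cite{Goldman-Kauffman}).

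However, the decisive step is missing, and you say so yourself: ``the main obstacle is precisely the bookkeeping,'' and you only conjecture that the seam defects ``propagate and cancel'' rather than accumulating one per seam. That is the entire content of the lemma --- a priori, if the leading continued-fraction coefficients of consecutive tangles alternate in sign, every one of the $n-1$ seams is bad and you get a bound growing with $n$, which proves nothing. Your proposed repair, ``reflecting $T_{i+1}$,'' is also not obviously legitimate: a genuine mirror reflection negates the tangle fraction and changes the knot, and while the $\pi$-rotations about in-plane axes do preserve a rational tangle, you have not checked that they adjust the corner over/under data in the way you need, nor that a consistent global choice exists. The paper closes this gap with a different and concrete move: whenever two adjacent tangles have leading coefficients of opposite sign, a Reidemeister-II move inserts a cancelling pair of twist regions, and commuting these twist blocks along the necklace makes every seam good except the two adjacent to the inserted pair; together with the central and unbounded domains this yields at most four bad domains, uniformly in $n$ and in the complexity of the $r_i$. (The paper also notes an alternative that sidesteps the isotopy entirely: by mutation one can reorder the tangles so that consecutive slopes agree in sign with at most one exception, and mutation invariance of $th$ \cite{Zib} then applies.) As it stands, your argument establishes the framework but not the bound $B(D)\leq 4$, so the proof is incomplete at its central point.
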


\begin{rem}
  A quantity similar to $\beta (K)$ has been introduced by Turaev~\cite{Turaev},
  now called the \defin{Turaev genus} $g_T(K)$. An inequality similar to
  Inequality~\eqref{eq:ThicknessBadness} for the Turaev genus and the
  (knot Floer) thickness $th(K)$ was shown by Lowrence in~\cite{Lowrance}.
  As the Turaev genus of non-alternating Montesinos knots is known
  to be equal to 1~\cite{ChampKof, FiveAuthors},
  our Corollary~\ref{cor:Monte} also follows from \cite{Lowrance}. 
\end{rem}

The formula of Inequality~\eqref{eq:ThicknessBadness} can be used in a
further way: by a recent result of Zibrowius~\cite{Zib} mutation does not
change $\HFKa ^{\delta}(K)$, hence leaves $th(K)$ unchanged. Consequently,
besides isotopies we can change a diagram by mutations to get better estimates
for $th(K)$ through $B(D)$ for a diagram $D$ of a mutant.

The paper is organized as follows. In Section~\ref{sec:KnotFloer} we
recall basics of knot Floer homology and prove the theorem stated
above. In Section~\ref{sec:Montesinos} we give the details of the
proof of Corollary~\ref{cor:Monte}, and finally in
Section~\ref{sec:further} we list some further properties and
questions regarding $\beta$.

{\bf {Acknowledgements}}: AS was partially supported by the
\emph{\'Elvonal (Frontier) project} of the NKFIH (KKP126683).  ZSz was
partially supported by NSF Grants DMS-1606571 and DMS-1904628. We
would like to thank Jen Hom and Tye Lidman for a motivating
discussion.

\section{The knot Floer homology thickness of knots}
\label{sec:KnotFloer}

Suppose that $V=\sum _aV_a$ is a finite dimensional graded vector space,
where $V_a\subset V$ is the subspace of homogeneous elements of
grading $a\in \R$. The \emph{thickness} $th(V)$ of $V$ is by definition the
largest possible difference between gradings of (non-zero) homogeneous elements:
\[
th(V)=\max \{ a\in \R \mid V_a\neq 0\}-\min \{a \in \R  \mid V_a\neq 0\}.
\]

Suppose now that the graded vector space $V$ is endowed with a
boundary operator $\partial$ of degree $1$; then the homology
$H(V,\partial )$ also admits a natural grading from the grading of $V$.
As $H(V, \partial )$ is the quotient of a subspace of $V$,
it is easy to see that
\[
th(H(V,\partial ))\leq th (V).
\]

The hat version of knot Floer homology (over the field $\Field$ of two
elements) of a knot $K\subset S^3$ is a finite dimensional bigraded
vector space $\HFKa (K)=\sum _{M,A} \HFKa _M (K, A)$. By collapsing
the two gradings to $\delta =A-M$, we get the $\delta$-graded
invariant ${\HFKa }^{\delta}(K)$. The thickness of ${\HFKa
}^{\delta}(K)$ is by definition the thickness $th(K)$ of $K$.

Knot Floer homology is defined as the homology of a chain complex
we can associate to a diagram of the knot (and some further choices).
Indeed, for a given  diagram $D$ of a knot $K$ fix a marking, that is a
point of $D$ which is not a crossing. 
Consider the bigraded vector space $C_{D,p}$ (graded by the Alexander
and the Maslov gradings $A$ and $M$) associated to the marked
diagram $(D,p)$, which is  generated over $\Field$ by the
Kauffman states of the marked diagram, a concept which we recall below.

Suppose that for the marked diagram $(D, p)$ of the knot $K$ 
the set of crossings is denoted by $Cr(D)$, the set of domains by
$Dom(D)$, and $Dom_p(D)$ denotes the set of those domains which do not contain
$p$ on their boundary. A \emph{Kauffman state} $\kappa$ is a bijection
$\kappa \colon Cr (D)\to Dom_p(D)$ with the property that for a
crossing $c\in Cr (D)$ the value $\kappa (c)$ is one of the (at most
four) domains meeting at $c$. The Alexander, Maslov and
$\delta$-gradings of a Kauffman state are computed by summing the
local contributions at each crossing, as given by the diagrams of
Figure~\ref{fig:Contributions}.
\begin{figure}
\centering
\includegraphics{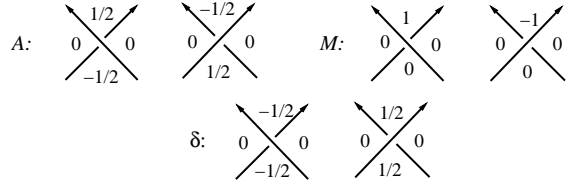}
\caption{{\bf The local contributions for $A, M$ and $\delta$ at a
    crossing.}  The Kauffman state distinguishes a corner at the
  crossing, and we take the value in that corner as a contribution of
  the crossing to $A,M$ or $\delta$ of the Kauffman state at hand.}
\label{fig:Contributions} 
\end{figure}

According to \cite{OSzAlternating} there is a boundary map $\partial
\colon C_{D,p}\to C_{D,p}$
of bidegree $(-1,0)$ (in the bigrading $(M,A)$)
with the property that $H(C_{D,p}, \partial
)=\HFKa (K)$ is isomorphic to the knot Floer homology of $K$
(as a bigraded vector space). By collapsing the two
gradings $A$ and $M$ to $\delta =A-M$, we get the graded vector spaces
$(C_{D,p}^{\delta},\partial )$ and its homology ${\HFKa
}^{\delta}(K)$.  As $\HFKa^{\delta }(K)$ is the quotient of a subspace
of $C^{\delta}_{D,P}$, we have that $th({\HFKa }^{\delta}(K)) \leq
th(C_{D,p}^{\delta}, \partial)$.

\begin{prop}
  \label{prop:ThicknessBadDomains}
  Suppose that $D$ is a diagram of the knot $K$.  If $D$ is not an
  alternating diagram, then
  \[
 th(C^{\delta}_{D,p})\leq \frac{1}{2}B(D)-1.
\]  
  \end{prop}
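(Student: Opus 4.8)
The plan is to work entirely at the chain level and to bound the spread of the $\delta$-grading over Kauffman states; since the homology is a subquotient, this controls $th(C^{\delta}_{D,p})$ directly, as already noted in the text. Because $C^{\delta}_{D,p}$ is generated over $\Field$ by the Kauffman states of $(D,p)$, each of which is homogeneous of grading $\delta(\kappa)$, its thickness is exactly $\max_{\kappa}\delta(\kappa)-\min_{\kappa}\delta(\kappa)$. The starting point is the purely local fact, read off from Figure~\ref{fig:Contributions}, that $\delta(\kappa)=\sum_{c\in Cr(D)}\delta_c(\kappa(c))$ is a sum of corner contributions, and that at a single crossing the four values of $\delta_c$ agree on each pair of opposite corners and differ by exactly $\frac{1}{2}$ between adjacent corners, the sign of the jump being governed by whether the strand separating the two corners runs over or under at that crossing. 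This is precisely the input that makes $\delta$ constant on an alternating diagram, and I would first re-derive that constancy, since the desired estimate is a quantitative refinement of it.

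To compare two arbitrary states I would view each as a perfect matching in the bipartite incidence graph with vertex classes $Cr(D)$ and $Dom_p(D)$, an edge joining a crossing to each domain incident to it. The symmetric difference of two matchings $\kappa,\kappa'$ is a disjoint union of alternating cycles, and $\delta(\kappa)-\delta(\kappa')$ decomposes as a sum of one contribution per cycle. Realising each cycle as a closed loop $\Gamma$ that threads through the crossings and domains it meets, its contribution becomes a signed count, in units of $\frac{1}{2}$, of the over- and under-strand ends that $\Gamma$ cuts as it turns each corner.

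The heart of the matter is a cancellation phenomenon: these signed contributions organise themselves so that the passages of a cycle through the good part of the diagram contribute nothing to $\delta(\kappa)-\delta(\kappa')$, exactly as in the alternating case, and only the edges failing the alternating condition — those witnessing that a domain is bad — can produce a net change. I would make this precise by introducing a height function on the domains that is locally well defined with canonical $\pm\frac{1}{2}$ jumps across good edges and whose only monodromy is concentrated at the bad edges, so that the contribution of each cycle is governed by the bad domains it meets. Accounting for these contributions should yield a bound of the shape $th(C^{\delta}_{D,p})\le \frac{1}{2}B(D)$.

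I expect the genuine difficulty to lie in this accounting rather than in the local computation of Figure~\ref{fig:Contributions}, which is routine once the conventions are fixed. Two points need care. First, one must check that the good-domain cancellation is exact around an arbitrary alternating cycle — including cycles that wind several times around the diagram — so that each unit of $\delta$-variation is charged to a bad domain without double counting; pinning down the constant $\frac{1}{2}$ is exactly this step. Second, one must extract the extra $-1$. This improvement should come from the global constraints peculiar to Kauffman states: the two domains bordering the marking $p$ are forbidden as values of $\kappa$ and are adjacent to one another across the marked edge, which removes one unit of attainable $\delta$-spread; this is also what forces $B(D)\ge 2$ for a non-alternating diagram, so that the bound is non-negative precisely in the stated range.
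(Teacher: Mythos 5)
Your overall strategy is the right one --- bound the spread of $\delta$ over Kauffman states at the chain level, argue that good domains contribute nothing to that spread while each bad domain contributes at most $\tfrac12$, and then gain the extra $-1$ from the two domains excluded by the marking --- but the central step is left as a plan rather than proved, and the machinery you propose for it is both heavier than necessary and not obviously workable. The key point you are missing is a simple regrouping: writing $\delta(\kappa)=-\tfrac14\,{\rm wr}(D)+\sum_{c\in Cr(D)}f(\kappa(c))$ with $f=\pm\tfrac14$ the corner contribution from Figure~\ref{fig:Contributions}, one should sum \emph{by domain} rather than by crossing. Since $\kappa$ is a bijection $Cr(D)\to Dom_p(D)$, each domain $d\in Dom_p(D)$ receives exactly one Kauffman corner from each state, so $\delta(\kappa)$ is (up to the state-independent writhe term) a sum over domains of $f$ evaluated at the single corner of $d$ selected by $\kappa$. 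A short local computation shows that \emph{all} corners of a good domain carry the same $f$-value (consecutive corners along the boundary of a domain have equal $f$-values exactly when the edge between them joins an over- to an under-crossing), so a good domain contributes a constant independent of $\kappa$, while a bad domain's contribution varies by at most $2\cdot\tfrac14=\tfrac12$. This gives $th(C^{\delta}_{D,p})\le\tfrac12 B(D)$ in two lines, with no need for the symmetric-difference cycle decomposition or the height function with monodromy; in your framing, the cancellation around an arbitrary alternating cycle (including winding cycles) is precisely the part you flag as difficult and never actually establish, so as written the proof has a gap exactly at what you call the heart of the matter.

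The second, smaller gap concerns the $-1$. The improvement does not come from the marking being placed anywhere: you must \emph{choose} $p$ on an edge witnessing badness, i.e.\ an edge joining two over-crossings or two under-crossings. Such an edge exists because $D$ is non-alternating, and both domains adjacent to it are then bad; since these two domains lie outside $Dom_p(D)$ they receive no Kauffman corner, which removes two bad domains (hence $2\cdot\tfrac12=1$ of potential spread) from the count and yields $\tfrac12(B(D)-2)$. If $p$ sits on a good edge, the two excluded domains may be good and you gain nothing. You gesture at this when you note that a bad edge forces $B(D)\ge 2$, but the explicit choice of $p$ is a necessary part of the argument and should be stated.
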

\begin{proof}
Fix a marked point $p$ on $D$, and 
  consider the $\delta$-graded chain complex
  $(C^{\delta}_{D,p},\partial )$ generated by the Kauffman states of $(D,p)$.

  The $\delta$-grading at a positive crossing is either 0 or $-\frac{1}{2}$,
and at a negative crossing it is either 0 or $\frac{1}{2}$.
So we can express the $\delta$-grading of a
  Kauffman state $\kappa$ as the sum
  \[
  -\frac{1}{4}{\rm {wr}}(D)+\sum _{c\in Cr}f(\kappa (c)),
  \]
  where ${\rm {wr}}$ is the writhe of the diagram, and $f$ is a function
  on the Kauffman corners, which is either $\frac{1}{4}$ or $-\frac{1}{4}$
  (depending on the chosen quadrant at the crossing $c$).

  Simple computation shows that for a good domain each corner in the
  domain gives the same $f$-value, hence for different Kauffman states
  the contributions from this particular domain are the same. This is
  no longer true for a bad domain, but the difference of two
  contributions is at most $\frac{1}{2}$.  When determining the possible
  maximum of $\delta (x)-\delta (x')$ for two homogeneous elements
  $x,x'\in C_{D,p}^{\delta}$, the contributions from the writhe cancel,
  and so do the contributions from good domains, while bad domains
  contribute at most $\frac{1}{2}$. This shows that
  $th(C_{D,p}^{\delta})\leq \frac{1}{2}B(D)$.

  By assumption $D$ is not alternating, hence there is a bad domain,
  with an edge showing that it is bad. Choose the marking $p$ on such an
  edge.  Since this edge guarantees that the two domains having it
 on their boundary are both bad, while these two bad domains do not
  get Kauffman corners, we get that $th(C_{D,p})$ is bounded by
  $\frac{1}{2}(B(D)-2)=\frac{1}{2}B(D)-1$, concluding the proof.
\end{proof}

\begin{proof}[Proof of Theorem~\ref{thm:estimate}]
  Suppose that $K$ is not alternating. Then any diagram $D$ of
  $K$ is non-alternating, hence we have that 
\[
  th(K)\leq th(C_{D,p}^{\delta})\leq \frac{1}{2}B(D)-1.
  \]
  Since $\beta (K)$ is computed from the minimum of the right-hand
  side of this inequality, the proof follows at once.
\end{proof}

\section{Montesinos knots}
\label{sec:Montesinos}

Montesinos knots are straighforward generalizations of pretzel knots;
a diagram involving rational tangles defining the Montesinos
knot $M(r_1, \ldots , r_n)$ is shown by Figure~\ref{fig:monte}. 
(A box with a rational number $r_i$ in it symbolizes the
tangle shown by Figure~\ref{fig:tangle}.)
\begin{figure}
\centering
\includegraphics{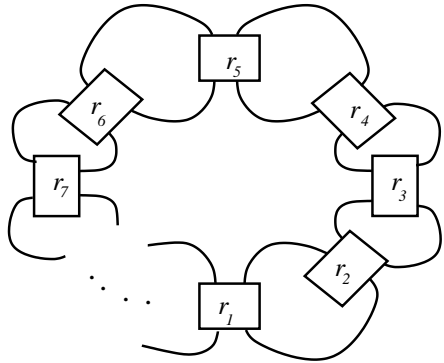}
\caption{{\bf The Montesinos knot $M(r_1, \ldots , r_n)$.}  The box
  containing $r_i$ denotes the algebraic tangle determined by the
  rational number $r_i=\frac{\beta _i}{\alpha _i}$
  (cf. Figure~\ref{fig:tangle}).  In order to have a knot, at most one
  of $\alpha _i$ can be even.}
\label{fig:monte} 
\end{figure}
\begin{figure}
\centering
\includegraphics[width=5cm]{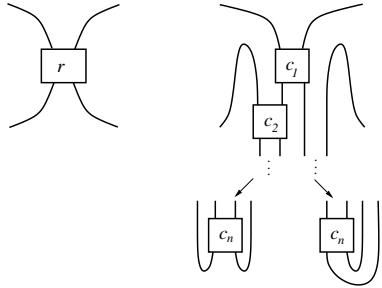}
\caption{{\bf The rational tangle corresponding to $r\in \Q$.}  Here
  the boxes with $c_i\in \Z$ on the right denote $\vert c_i\vert $
  half twists (right handed for positive, left handed for negative
  $c_i$). Depending on the parity of $n$ (the numbre of $c_i$) we have
  two different finishing forms. The rational number $r$ determines
  the coefficients $c_i$ through its continued fraction expansion. The
  tangle is alternating (as part of a knot or link) if $c_i$ alternate
  in signs.}
\label{fig:tangle} 
\end{figure}
We allow any of the $r_i$ to be equal to $\pm 1$. Notice that the
order of $(r_1, \ldots , r_n)$ is important; those $r_i$ which are equal
to $\pm 1$ can be commuted with any other parameter through a simple
isotopy of the diagram.

\begin{lem}\label{lem:isotopy}
  Consider the diagram of the Montesinos knot $M(r_1, \ldots , r_n)$ given
  by Figure~\ref{fig:monte}.
  It can be isotoped to a diagram with at most four bad domains.
\end{lem}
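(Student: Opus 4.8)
The plan is to analyze the standard Montesinos diagram of Figure~\ref{fig:monte} and understand precisely where its bad domains come from, then use isotopies to concentrate all of the non-alternating behavior into a bounded number of domains. First I would recall that a rational tangle, when its continued-fraction coefficients $c_i$ alternate in sign, is alternating as a subdiagram: locally every edge on the boundary of an internal domain of the tangle box joins an over- and an under-crossing. So by choosing the continued fraction expansion of each $r_i$ with alternating signs (which is always possible, at the cost of making the $c_i$ not all of one sign), I can arrange that each individual tangle box contributes \emph{no} bad domains in its interior. The only domains that can fail to be good are then the ones where the separate tangle boxes are glued together in the cyclic pattern of Figure~\ref{fig:monte}, together with the unbounded domain.

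The key step is a careful local bookkeeping of the "seam" domains between consecutive tangles. When two alternating tangles are placed side by side, the alternating pattern of the first may clash with that of the second along the arcs that connect them; whether the shared domains are good depends on the signs chosen at the extreme crossings of each tangle and on the finishing form (which, per Figure~\ref{fig:tangle}, depends on the parity of the number of $c_i$). I would show that by a sequence of flype-type isotopies and by re-expanding the continued fractions (switching $r_i$ between the two normal forms $\frac{\beta_i}{\alpha_i}$ available to us) one can make the tangles mesh alternatingly at every seam except possibly near one distinguished place in the cyclic diagram. The two globally problematic regions are the large inner region and the outer (unbounded) region of the wheel-like picture in Figure~\ref{fig:monte}; these are where the cyclic closure forces a mismatch that cannot be removed.

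Concretely, I would argue that after the isotopy every internal seam can be made good, leaving at most the two "global" domains (inner and outer) and at most two additional transitional domains adjacent to the single unavoidable mismatch, for a total of at most four bad domains. The counting is the delicate part: I expect the main obstacle to be verifying that the local alternating patterns of the rational tangles can genuinely be synchronized all the way around the cycle up to one breakpoint, rather than accumulating one mismatch per tangle. This requires checking that the parity conditions on the $\alpha_i$ (at most one even, as stated in Figure~\ref{fig:monte}) guarantee consistent over/under assignments at the junctions, and that the isotopies used to commute the $r_i=\pm 1$ parameters and to flype the tangles do not create new bad domains elsewhere. Once this synchronization is established, the bound $B(D')\le 4$ for the isotoped diagram $D'$ follows by direct inspection of the remaining domains.
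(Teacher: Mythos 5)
Your overall architecture matches the paper's: make each rational tangle internally alternating, observe that the only candidate bad domains are then the seams between consecutive tangles together with the central and unbounded regions, and concentrate all seam failures near a single place so that the count becomes $2+2=4$. The target count is correct. But the step you yourself flag as ``the delicate part'' is exactly where the proposal has a genuine gap, and the tools you name do not close it. A seam between tangles $i$ and $i+1$ is bad precisely when the leading coefficients $c_1^i$ and $c_1^{i+1}$ of the two continued-fraction expansions have opposite signs, and in an alternating expansion the sign of $c_1^i$ is forced by the sign of $r_i$: re-expanding the continued fraction or switching between the two finishing forms of Figure~\ref{fig:tangle} cannot flip it while keeping the tangle alternating, and flypes do not reorder the tangles (reordering is a mutation, which the paper deliberately keeps out of this lemma and treats in a separate remark, since the lemma is about isotopy). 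Consequently, for a Montesinos knot whose parameters alternate in sign around the cycle --- say $M(r_1,\ldots,r_5)$ with signs $+,-,+,-,+$ --- four of the seams are bad, and nothing in your argument prevents one mismatch per sign change from accumulating; your claim that all seams can be synchronized up to one breakpoint is asserted, not proved.

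The missing ingredient, which is the actual content of the paper's proof, is a Reidemeister-II move inserting a cancelling pair of twist regions at a bad seam (Figure~\ref{fig:cancel}); one of the two inserted twist boxes is then commuted so that all sign mismatches are absorbed at a single location between the first and second tangles. This costs exactly the two ``transitional'' bad domains you anticipated (the domains separating the inserted twists from the adjacent tangles), makes every other seam good, and one then re-alternates the tangles via the isotopies of Goldman--Kauffman. Without this insertion, or some equivalent device that you would have to supply and verify, the reduction from ``one bad seam per sign change'' to ``at most one bad location in total'' does not go through, so the proposal as written does not prove the lemma. (Your appeal to the parity condition on the $\alpha_i$ is also a red herring: that condition only ensures the Montesinos link is a knot and plays no role in the bad-domain count.)
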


\begin{proof}
  Recall that a rational tangle has the form given by Figure~\ref{fig:tangle}.
Adapting the isotopies described in \cite{Goldman-Kauffman}, we can
achieve that all tangles are alternating, hence the potentially bad
domains are the ones between the tangles, together with the central
and the unbounded domains.  The number of bad domains
between the tangles can be reduced by the following observation.  The
domain between two tangles is bad if the first coefficients $c_1^1$
and $c_1^2$ of the two rational numbers determining the tangles have
opposite signs, say $c_1^1>0$ and $c_1^2<0$. Then by Reidemestier-II
moves we can introduce cancelling twistings, as shown by
Figure~\ref{fig:cancel}, and then commute the first twisting (in the figure
given by the box with $2$ in it) between the first and second tangles
of the Montesinos knot.
\begin{figure}
\centering
\includegraphics[width=5cm]{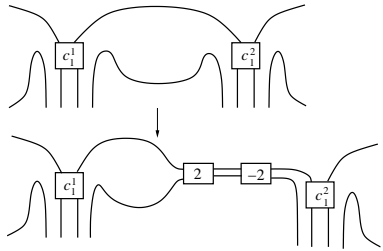}
\caption{\bf The introduction of cancelling twists to turn domains
    between tangles good.}
\label{fig:cancel} 
\end{figure}
All domains between the boxes will become good, except the ones
connecting the first tangle with the newly introduced twists and the
second tangle also connecting it with the newly introduced twists.
After these alterations make sure that (by the adaptation of
\cite{Goldman-Kauffman}) all tangles are isotoped to be alternating.
In total the new diagram then has four bad domains, concluding the
proof.
\end{proof}

\begin{proof}[Proof of Corollary~\ref{cor:Monte}]
  For a Montesinos knot $M(r_1, \ldots , r_n)$ an approrpiate isotopy
  of the diagram of Figure~\ref{fig:monte} (as given by
  Lemma~\ref{lem:isotopy}) gives a diagram with at most four bad
  domains. The application of Theorem~\ref{thm:estimate} concludes the
  argument.
\end{proof}

\begin{rem}
  Using the mutation invariance of $th(K)$, Lemma~\ref{lem:isotopy}
  can be avoided: by mutations any Montesinos knot $M(r_1, \ldots ,
  r_n)$ can be moved to $M(q_1, \ldots , q_n)$ with the same rational
  parameters in a different order so that $q_i$ and $q_{i+1}$ have the
  same sign with at most one exception. Isotoping the diagram so that
  the tangles are alternating, the mutated diagram then has at most 4
  diagrams. Using the result of \cite{Zib} then the corollary follows
  as before.
  \end{rem}

\section{Further properties}
\label{sec:further}
It is a standard fact that the knot Floer homology of the connected
sum of two knots is the tensor product of the knot Floer homologies:
\[
\HFKa (K_1\# K_2)\cong \HFKa (K_1)\otimes \HFKa (K_2).
\]
From this (bigraded) isomorphism it follows that
\[
th(K_1\# K_2)=th(K_1)+th(K_2).
\]
The behaviour of $\beta (K)$ is less clear under connected summing.
Suppose that $K_1, K_2$ are both non-alternating knots. By taking the
connected sum of two diagrams $D_1,D_2$ for these knots at bad edges
(i.e. arcs on the boundary of bad domains verifying that the domains
are bad) we get that
\[
B(D_1\# D_2)=B(D_1)+B(D_2)-2,
\]
immediately implying that
\[
\beta (K_1\# K_2)\leq \beta (K_1)+\beta (K_2)-2.
\]
Motivated by the equality for the thickness $th$, we conjecture
\begin{conj}
  If $K_1, K_2$ are two non-alternating knots, then
  \[
  \beta (K_1\# K_2)= \beta (K_1)+\beta (K_2)-2.
  \]
\end{conj}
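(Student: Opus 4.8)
The upper bound $\beta(K_1\#K_2)\le \beta(K_1)+\beta(K_2)-2$ is already in hand from the connected-sum-of-diagrams construction above, so the entire content of the conjecture is the reverse inequality $\beta(K_1\#K_2)\ge \beta(K_1)+\beta(K_2)-2$. I would first record the partial result that isolates the difficulty. It is known that a connected sum of knots is alternating if and only if both summands are, so $K_1\#K_2$ is non-alternating and Theorem~\ref{thm:estimate} applies to all three knots. Rewriting that theorem as $\beta(K)\ge 2\,th(K)+2$ and combining it with the additivity $th(K_1\#K_2)=th(K_1)+th(K_2)$ gives
\[
\beta(K_1\#K_2)\ \ge\ 2\,th(K_1\#K_2)+2\ =\ 2\,th(K_1)+2\,th(K_2)+2.
\]
Hence whenever the estimate of Theorem~\ref{thm:estimate} is sharp for both summands, i.e. $\beta(K_i)=2\,th(K_i)+2$, the right-hand side equals $\beta(K_1)+\beta(K_2)-2$, so the conjecture holds and the estimate is sharp for $K_1\#K_2$ as well. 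In particular this settles the conjecture for summands such as non-alternating Montesinos knots with $th=1$, where the bounds $2\le\beta\le 4$ of the previous section force $\beta=4$.

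For the general case the thickness bound is too lossy, since $\beta$ can strictly exceed $2\,th+2$, and a genuinely diagrammatic argument seems unavoidable. The plan is to begin with a minimizing diagram $D$ of $K_1\#K_2$ with $B(D)=\beta(K_1\#K_2)$ and to locate a decomposing sphere of the connected sum meeting $D$ in a controlled way. If that sphere can be isotoped to meet the projection plane in a single arc crossing $D$ transversally in two points lying on one good edge, then cutting produces diagrams $D_1$ of $K_1$ and $D_2$ of $K_2$, and one would try to show that every bad domain of $D$ is inherited from a bad domain of $D_1$ or of $D_2$, with exactly the two domains adjacent to the cutting edge identified. This would yield $B(D)\ge B(D_1)+B(D_2)-2\ge \beta(K_1)+\beta(K_2)-2$, precisely reversing the construction used for the upper bound.

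The hard part — and the reason this remains conjectural — is exactly that a minimizing diagram of a composite knot need not display its connected-sum structure: the decomposing sphere may meet $D$ in many points, and there is no a priori reason the minimizer is itself a connected sum of diagrams. This is the same phenomenon that leaves additivity of the crossing number open in general, so I would not expect an elementary resolution of the sphere-positioning step. Two routes around it seem most promising. The first is to replace the geometric argument by an algebraic one on the Kauffman-state complex, analyzing how $(C^{\delta}_{D,p},\partial)$ factors for a composite knot and reading off a lower bound on the number of bad domains from the tensor-product structure $\HFKa(K_1\#K_2)\cong\HFKa(K_1)\otimes\HFKa(K_2)$ together with finer (e.g. filtered or bordered) data remembering more than $th$. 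The second is to pass through the Turaev genus $g_T$: since $\beta$ and $g_T$ both vanish exactly for alternating knots and both control $th$, pinning down an exact comparison between them would let one transfer the additivity behaviour of $g_T$ to $\beta$. Either route requires input beyond what is proved here, and the diagrammatic positioning of the decomposing sphere against a minimizer is, in my estimation, the central obstacle.
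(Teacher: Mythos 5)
The statement you were asked about is stated in the paper as a \emph{conjecture}: the authors prove only the upper bound $\beta(K_1\# K_2)\leq \beta(K_1)+\beta(K_2)-2$ (by connect-summing diagrams along bad edges) and offer no proof of the reverse inequality. So there is no proof in the paper to compare yours against, and you correctly recognize that your proposal is not a proof either. Within that framing, your partial observations are sound and actually go a little beyond what the paper records: combining Theorem~\ref{thm:estimate} in the form $\beta(K)\geq 2\,th(K)+2$ (valid since $K_1\# K_2$ is non-alternating by Menasco's theorem) with $th(K_1\# K_2)=th(K_1)+th(K_2)$ does give $\beta(K_1\# K_2)\geq 2\,th(K_1)+2\,th(K_2)+2$, and this matches the upper bound exactly when $\beta(K_i)=2\,th(K_i)+2$ for both summands. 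Your Montesinos example is correct in substance, though the phrasing ``the bounds $2\leq\beta\leq 4$ force $\beta=4$'' is off: what forces $\beta=4$ is $\beta\geq 2\,th+2=4$ together with $\beta\leq 4$, not the stated two-sided bound.

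The genuine gap is the one you name yourself: for general summands the thickness bound is strictly weaker than the conjectured value, and the diagrammatic strategy requires showing that a $B$-minimizing diagram of a composite knot can be arranged so that a decomposing sphere meets it in two points on a single edge, with bad domains behaving additively under the cut. Nothing in the paper supplies this, and as you note it is the same kind of ``minimizers need not respect the connected-sum structure'' obstruction that blocks additivity of the crossing number. Your two proposed workarounds (a finer algebraic invariant than $th$ extracted from the Kauffman-state complex, or a precise comparison with the Turaev genus) are reasonable directions but are not carried out. In short: your write-up is an honest and essentially accurate assessment of an open problem, with a correct conditional special case, rather than a proof; no step you actually assert is wrong, but the conjecture remains unproved.
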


\bigskip

It is not hard to find knot diagrams for which
Inequality~\eqref{eq:ThicknessBadness} is sharp. Indeed, the standard
diagram of the pretzel knot $P(-3,5,5)$ admits four bad domains (see
Figure~\ref{fig:equality}(a)), while an explicite calculation of
$\HFKa (P(-3,5,5))$ shows that $th(P(-3,5,5))=1$. Consider the
$n$-fold connected sum $K_n=\#_n P(-3,5,5)$;  connect summing the
diagrams at bad edges (in the above sense) we get a sequence of knots
$K_n$ and diagrams $D_n$ for them with the properties that $th
(K_n)=n$ and $B(D_n)=2n+2$, cf. Figure~\ref{fig:equality}(b).
The non-alternating knots $K_n$ then satisfy
$n=th(K_n)=\frac{1}{2}\beta (K_n)-1$.
\begin{figure}
\centering
\includegraphics[width=5cm]{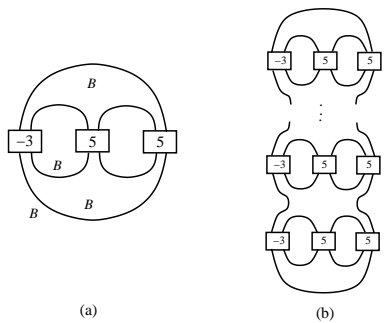}
\caption{ In (a) he pretzel knot $P(-3,,5,5)$ is shown. The $B$ symbols
  signify the bad domains. (A box containing
  the integer $n$ denotes $\vert n \vert $ half twists, right handed for $n>0$
  and left handed for $n<0$.) In (b) we provide a diagram of $K_n$, where the
connected sum is taken at bad domains.}
\label{fig:equality} 
\end{figure}

\bibliography{biblio} \bibliographystyle{plain}

\end{document}